\newtheorem{theorem}{Theorem}
\newtheorem{corollary}[theorem]{Corollary}
\newtheorem{lemma}[theorem]{Lemma}
\newtheorem{proposition}[theorem]{Proposition}
\theoremstyle{definition}
\newtheorem{remark}[theorem]{Remark}
\DeclareMathOperator{\N}{N}
\DeclareMathOperator{\Tr}{Tr}
\title{On a Class of Permutation Rational Functions Involving Trace Maps}
\author{Ruikai Chen\textsuperscript{1,2}\and Sihem Mesnager\textsuperscript{1,2,3}}
\date{\small\textsuperscript{1}Department of Mathematics, University of Paris VIII, F-93526 Saint-Denis\\\textsuperscript{2}Laboratory Analysis, Geometry and Applications, LAGA, University Sorbonne Paris Nord, CNRS, UMR 7539, F-93430, Villetaneuse, France\\\textsuperscript{3}Telecom Paris, Polytechnic institute of Paris, 91120 Palaiseau, France\\Emails: \href{mailto:chen.rk@outlook.com}{chen.rk@outlook.com}\quad\href{mailto:smesnager@univ-paris8.fr}{smesnager@univ-paris8.fr}}
\begin{document}

\maketitle

\noindent\textbf{Abstract.} Permutation rational functions over finite fields have attracted high interest in recent years. However, only a few of them have been exhibited. This article studies a class of permutation rational functions constructed using trace maps on extensions of finite fields, especially for the cases of quadratic and cubic extensions. Our achievements are obtained by investigating absolute irreducibility of some polynomials in two indeterminates.\\
\textbf{Keywords.} Finite Field, Extension Field, Polynomial, Permutation, Rational Function.\\
\textbf{Mathematics Subject Classification.} 11T06, 14H05.

\section{Introduction}

Let $q$ be a prime power and $\mathbb F_q$ denote the finite field of order $q$. A permutation polynomial over $\mathbb F_q$ is a polynomial that acts as a permutation on $\mathbb F_q$ in the usual way (see \cite[Chapter 7]{lidl1997} or \cite[Chapter II.8]{mullen2013}). Permutation polynomials over finite fields are exciting objects in number theory, studied widely in the last decades and employed extensively in coding theory, cryptography, combinatorics, etc. Finding permutation polynomials of simple forms and investigating their properties is still appealing and challenging.

A permutation polynomial can appear as a rational function, if the rational function defines a map from a finite field to itself. Consider some extension $\mathbb F_{q^n}$ ($n>1$) of $\mathbb F_q$ and let $\Tr$ be the trace map from $\mathbb F_{q^n}$ to $\mathbb F_q$ given by
\[\Tr(x)=\sum_{i=0}^{n-1}x^{q^i}.\]
Permutation polynomials of the form
\[L(x)+(x^q-x+b)^s\]
for some $q$-linear polynomial $L$ over $\mathbb F_{q^n}$ (a polynomial that induces a linear endomorphism of $\mathbb F_{q^n}$ over $\mathbb F_q$), an integer $s$ and $b\in\mathbb F_{q^n}$ have drawn a lot of attention in recent years. See \cite{helleseth2003new, zha2012two, li2013further, yuan2014further} for example. In particular, some necessary and sufficient conditions are given in \cite{hou2020type} and \cite{bartoli2021conjecture} for
\[x+(x^q-x+b)^{-1},\]
with $\Tr(b)\ne0$ to be a permutation rational function of $\mathbb F_{q^n}$; for the special case that $q$ is odd and $n=2$, it has been shown that it is a permutation rational function of $\mathbb F_{q^2}$ if and only if $\Tr(b)^2=1$. Furthermore, we have a related polynomial 
\[L(x)+(\Tr(x)+b)^s.\]
Permutation polynomials of this form have been studied in \cite{zeng2015permutation}, where $q$ is even and $s$ satisfies certain conditions.

In this paper, we study permutation rational functions of the form
\[L(x)+\frac c{\Tr(x)+b},\]
where $b\in\mathbb F_{q^n}\setminus\mathbb F_q$, $c\in\mathbb F_{q^n}^*$, and $L$ is a $q$-linear polynomial over $\mathbb F_{q^n}$. We shall characterize those permutation rational functions by factoring certain polynomials over $\overline{\mathbb F_q}$ in two indeterminates $X$ and $Y$, where $\overline{\mathbb F_q}$ denotes the algebraic closure of $\mathbb F_q$. When considering $\mathbb F_{q^n}(X,Y)/\mathbb F_q(X,Y)$, let $\Tr$ and $\N$ be defined as
\[\Tr(g)=\sum_{i=0}^{n-1}\sigma_q^i(g),\quad\text{and}\quad\N(g)=\prod_{i=0}^{n-1}\sigma_q^i(g)\]
for $g\in\mathbb F_{q^n}(X,Y)$, where $\sigma_q$ is the automorphism that maps each coefficient of polynomials to its $q$-th power and leaves $X$ and $Y$ fixed. The notation will be used throughout, as well as the following lemma.

\begin{lemma}[{\cite[Lemma 3.2]{aubry1996weil}}]\label{bound}
For an absolutely irreducible affine plane curve of degree $d$ over $\mathbb F_q$ with $d>1$, if
\[q-(d-1)(d-2)\sqrt q-2d+1>0,\]
then it has a rational point $(x_0,y_0)$ over $\mathbb F_q$ with $x_0\ne y_0$.
\end{lemma}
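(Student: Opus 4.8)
The plan is to work with the projective closure of the curve, apply a Weil-type estimate for its number of rational points, and then delete the finitely many points forbidden by the conclusion, using B\'ezout's theorem to control how many there are.

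First I would let $C$ be the given affine curve and $\bar C\subset\mathbb P^2$ its projective closure; this is again an absolutely irreducible plane curve of degree $d$. The crucial input is a lower bound for $\#\bar C(\mathbb F_q)$. If $\bar C$ were smooth, the classical Hasse--Weil estimate $|\#\bar C(\mathbb F_q)-(q+1)|\le 2g\sqrt q$ with $g=(d-1)(d-2)/2$ would suffice. In general $\bar C$ may be singular, so I would use the Weil bound for singular curves: with $\pi=(d-1)(d-2)/2$ the arithmetic genus of a degree-$d$ plane curve and $g\le\pi$ the geometric genus of its normalization, one has $|\#\bar C(\mathbb F_q)-(q+1)|\le 2g\sqrt q+(\pi-g)$. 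Since $2\sqrt q\ge1$ for every prime power $q$, this gives $2g\sqrt q+(\pi-g)\le 2\pi\sqrt q=(d-1)(d-2)\sqrt q$, hence
\[\#\bar C(\mathbb F_q)\ge q+1-(d-1)(d-2)\sqrt q.\]

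Next I would remove the points that cannot serve. The points of $\bar C$ on the line at infinity $Z=0$ number at most $d$: because $\bar C$ is irreducible of degree $d>1$, that line is not one of its components, so by B\'ezout the two curves meet in at most $d$ points of $\mathbb P^2(\overline{\mathbb F_q})$, and a fortiori in at most $d$ rational ones. For the same reason the line $X=Y$ is not a component of $\bar C$, so the affine rational points of $C$ with $x_0=y_0$ also number at most $d$. These two sets are disjoint, and subtracting both leaves
\[\#\{(x_0,y_0)\in C(\mathbb F_q):x_0\ne y_0\}\ge q+1-(d-1)(d-2)\sqrt q-2d=q-(d-1)(d-2)\sqrt q-2d+1,\]
which is positive by hypothesis; any point counted on the left is the desired $(x_0,y_0)$.

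The only substantive ingredient is the point-count estimate, and I expect the main obstacle to lie exactly there: an absolutely irreducible plane curve of degree $d$ is generally singular, so the naive Hasse--Weil bound does not apply and one must absorb the contribution of the singular locus, which is precisely what the singular Weil theorem accomplishes. By contrast, checking that neither the line at infinity nor the diagonal is a component of $\bar C$ (so that B\'ezout legitimately caps each intersection at $d$ points) is immediate from absolute irreducibility together with $d>1$, and the final arithmetic is routine.
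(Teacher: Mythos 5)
Your proposal is correct, but note that the paper never proves this lemma at all---it is imported verbatim from Aubry and Perret (the cited Lemma 3.2), so there is no internal proof to compare against. Your argument is precisely the standard derivation of that cited result: pass to the projective closure, apply the singular Weil bound $|\#\bar C(\mathbb F_q)-(q+1)|\le 2g\sqrt q+(\pi-g)\le(d-1)(d-2)\sqrt q$, and use B\'ezout's theorem (legitimate because $d>1$ and absolute irreducibility guarantee that neither the line at infinity nor the diagonal $X=Y$ is a component of $\bar C$) to discard at most $2d$ unwanted points, leaving the stated positive count of affine points with $x_0\ne y_0$.
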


For the rational function $L(x)+\frac c{\Tr(x)+b}$, we divide the remainder of this paper into two parts. In Section \ref{s1}, we study the case that $L$ does not permute $\mathbb F_{q^n}$, and it turns out that such permutation rational functions tend not to exist. In Section \ref{s2}, permutation rational functions of the form $x+\frac c{\Tr(x)+b}$ are characterized, especially for small values of $n$.

\section{The Case That $L$ Does Not Permute $\mathbb F_{q^n}$}\label{s1}

Consider $\mathbb F_{q^n}$ as a vector space over $\mathbb F_q$. Recall that every linear map from $\mathbb F_{q^n}$ to $\mathbb F_q$ can be written as $\Tr(\beta x)$ for some $\beta\in\mathbb F_{q^n}$, and that for a basis $(\beta_1,\dots,\beta_n)$ of $\mathbb F_{q^n}/\mathbb F_q$, its dual basis $(\alpha_1,\dots,\alpha_n)$ satisfies
\[x=\alpha_1\Tr(\beta_1x)+\dots+\alpha_n\Tr(\beta_nx).\]
In other words, the map
\[x\mapsto(\Tr(\beta_1x),\dots,\Tr(\beta_nx))\]
is isomorphic from $\mathbb F_{q^n}$ to $\mathbb F_q^n$.

Let $r$ be the rank of $L$ as a linear endomorphism of $\mathbb F_{q^n}/\mathbb F_q$, and assume $r<n$. Let $(\alpha_1,\dots,\alpha_r)$ be a basis of the image of $L$ over $\mathbb F_q$, so that
\[L(x)=\sum_{i=1}^r\alpha_i\ell_i(x),\]
where $\ell_i$ is a map from $\mathbb F_{q^n}$ to $\mathbb F_q$, and
\[x\mapsto(\ell_1(x),\dots,\ell_r(x))\]
defines a map from $\mathbb F_{q^n}$ onto $\mathbb F_q^r$. In fact, $\ell_i$ is a linear map over $\mathbb F_q$, so
\[L(x)=\sum_{i=1}^r\alpha_i\Tr(\beta_ix)\]
for some $\beta_i\in\mathbb F_{q^n}$, where $\beta_1,\dots,\beta_r$ are linearly independent over $\mathbb F_q$.

Suppose $L(x)+\frac c{\Tr(x)+b}$ is a permutation rational function of $\mathbb F_{q^n}$. Then there do not exist $x_1,y_1\in\mathbb F_{q^n}$ with $x_1\ne y_1$ such that $\Tr(x_1)=\Tr(y_1)$ and $L(x_1)=L(y_1)$, which means the map
\[x\mapsto(\Tr(x),\Tr(\beta_1x),\dots,\Tr(\beta_rx))\]
is injective from $\mathbb F_{q^n}$ to $\mathbb F_q^{r+1}$. This implies that $r=n-1$, and the above map is also surjective. Assume $\frac{c(x_0-y_0)}{(x_0+b)(y_0+b)}$ belongs to the image of $L$ for some $x_0,y_0\in\mathbb F_q$ with $x_0\ne y_0$. Choose an element $y_1\in\mathbb F_{q^n}$ such that $\Tr(y_1)=y_0$, and $x_1\in\mathbb F_{q^n}$ determined by
\[\Tr(x_1)=x_0\quad\text{and}\quad L(x_1)=L(y_1)+\frac{c(x_0-y_0)}{(x_0+b)(y_0+b)}.\]
Clearly we have $x_1\ne y_1$ and
\[L(x_1)+\frac c{\Tr(x_1)+b}-L(y_1)-\frac c{\Tr(y_1)+b}=L(x_1-y_1)+\frac{c(y_0-x_0)}{(x_0+b)(y_0+b)}=0,\]
a contradiction, so the assumption fails to hold. On the other hand, there exists an element $\beta\in\mathbb F_{q^n}^*$ such that $\Tr(\beta\alpha_i)=0$ for $1\le i<n$, and thus $a\in\mathbb F_{q^n}$ is in the image of $L$ only if $\Tr(\beta a)=0$. The image of $L$ and the kernel of $\Tr(\beta x)$ in $\mathbb F_{q^n}$ both have dimension $n-1$ over $\mathbb F_q$, so they are actually equal. Consequently, we have
\[\Tr\left(\frac{\beta c}{(x_0+b)(y_0+b)}\right)\ne0\]
for all $x_0,y_0\in\mathbb F_q$ with $x_0\ne y_0$. Since $c\in\mathbb F_{q^n}^*$ is arbitrary, we may let $\beta=1$ and $L(x)=x^q-x$. However, such permutation rational functions do not exist in some cases. Before showing that, we present an auxiliary result.

\begin{lemma}\label{basis}
For $n=3$ and $b\in\mathbb F_{q^3}\setminus\mathbb F_q$, $(1,b^q+b,b^{q+1})$ is a basis of $\mathbb F_{q^3}/\mathbb F_q$.
\end{lemma}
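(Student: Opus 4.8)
The plan is to use only the easy direction of the standard Frobenius criterion: three elements of $\mathbb F_{q^3}$ are linearly independent over $\mathbb F_q$ whenever the determinant of the matrix formed by their successive images under $\sigma_q$ is nonzero. Concretely, suppose $c_0\cdot1+c_1(b^q+b)+c_2b^{q+1}=0$ with $c_0,c_1,c_2\in\mathbb F_q$. Applying $\sigma_q$ and $\sigma_q^2$, and using that $\sigma_q$ fixes each $c_i\in\mathbb F_q$ together with $b^{q^3}=b$, I obtain three scalar relations which I read as the homogeneous linear system
\[
\begin{pmatrix}1&b^q+b&b^{q+1}\\1&b^{q^2}+b^q&b^{q^2+q}\\1&b+b^{q^2}&b^{1+q^2}\end{pmatrix}\begin{pmatrix}c_0\\c_1\\c_2\end{pmatrix}=\begin{pmatrix}0\\0\\0\end{pmatrix}.
\]
Calling the coefficient matrix $M$, a nontrivial solution $(c_0,c_1,c_2)$ can exist only if $\det M=0$; hence it suffices to prove $\det M\ne0$, since that forces $c_0=c_1=c_2=0$ and thus linear independence.

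Next I would evaluate $\det M$ by elementary row reduction, which is the one genuinely computational step. Subtracting the first row from each of the other two clears the first column below the top entry and leaves second and third columns whose rows are proportional: the second row becomes $(0,\;b^{q^2}-b,\;b^q(b^{q^2}-b))$ and the third becomes $(0,\;b^{q^2}-b^q,\;b(b^{q^2}-b^q))$. Factoring $b^{q^2}-b$ and $b^{q^2}-b^q$ out of these rows and expanding the remaining $2\times2$ determinant $\det\bigl(\begin{smallmatrix}1&b^q\\1&b\end{smallmatrix}\bigr)=b-b^q$, the whole determinant collapses to the Vandermonde-type product
\[
\det M=(b^{q^2}-b)(b^{q^2}-b^q)(b-b^q),
\]
which is (up to sign) precisely the Vandermonde determinant of the Frobenius conjugates $b,b^q,b^{q^2}$.

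Finally I would verify this product is nonzero using separability of the extension. Because $[\mathbb F_{q^3}:\mathbb F_q]=3$ is prime and $b\notin\mathbb F_q$, the element $b$ lies in no proper subfield and so has degree exactly $3$ over $\mathbb F_q$; equivalently, its three conjugates $b,b^q,b^{q^2}$ are pairwise distinct. Indeed $b=b^q$ or $b=b^{q^2}$ would place $b$ in $\mathbb F_q$, and $b^q=b^{q^2}$ would yield $b=b^q$ after applying $\sigma_q^{-1}$, each contradicting $b\in\mathbb F_{q^3}\setminus\mathbb F_q$. Thus every factor above is nonzero, so $\det M\ne0$ and $(1,b^q+b,b^{q+1})$ is a basis of $\mathbb F_{q^3}/\mathbb F_q$. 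The only real obstacle is the determinant evaluation; once $\det M$ is recognized as the Vandermonde determinant of the conjugates of $b$, the distinctness of those conjugates concludes the argument at once.
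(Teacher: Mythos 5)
Your proof is correct, and it diverges from the paper's at the decisive step in a genuinely different (and more elementary) way. Both arguments reduce the lemma to the nonvanishing of the same $3\times3$ conjugate determinant: the paper invokes the Lidl--Niederreiter basis criterion directly, while you use only its easy direction (nonzero determinant forces $c_0=c_1=c_2=0$), which indeed suffices. The difference is in how the determinant is evaluated and shown nonzero. The paper expands it as $\N(b)\Tr\big(b^{q-1}-b^{q^2-1}\big)$ and then must prove that this trace expression does not vanish, which costs a case distinction on whether $b^{q-1}\in\mathbb F_q$: one branch uses $\N(b^{q-1})=1$ and $q\equiv1\pmod 3$, the other a minimal-polynomial argument showing that $\Tr\big(b^{q-1}-b^{q^2-1}\big)=0$ would force the irreducible cubic minimal polynomial of $b^{q-1}$ to have the root $1$. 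You instead row-reduce and factor the determinant as $\big(b^{q^2}-b\big)\big(b^{q^2}-b^q\big)\big(b-b^q\big)$; expanding both expressions shows this polynomial identity agrees exactly with the paper's trace form, so the two computations are consistent, but your factored form makes nonvanishing immediate: since $[\mathbb F_{q^3}:\mathbb F_q]=3$ is prime and $b\notin\mathbb F_q$, the conjugates $b$, $b^q$, $b^{q^2}$ are pairwise distinct, so every factor is nonzero. Your route buys the elimination of all casework and of the minimal-polynomial argument; the paper's route buys only the explicit trace identity for the determinant, which is never reused (the later applications of the lemma, in the Proposition of Section 2 and in the $n=3$ theorem, invoke only the basis property itself). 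For this lemma, your argument is the cleaner one.
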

\begin{proof}
In view of \cite[Corollary 2.38]{lidl1997}, the result follows from the fact that
\[\begin{split}\begin{vmatrix}1&b^q+b&b^{q+1}\\1&b^{q^2}+b^q&b^{q^2+q}\\1&b^{q^2}+b&b^{q^2+1}\end{vmatrix}&=\Tr\big(\big(b^{q^2}+b^q\big)b^{q^2+1}-b^{q^2+q}\big(b^{q^2}+b\big)\big)\\&=\N(b)\Tr\big(b^{q-1}-b^{q^2-1}\big)\\&\ne0.\end{split}\]
To see the inequality, note first that $b^{q-1}\ne b^{1-q}$. If $b^{q-1}\in\mathbb F_q$, then $(b^{q-1})^3=\N(b^{q-1})=1$, which means $q\equiv1\pmod3$ with
\[\Tr\big(b^{q-1}-b^{q^2-1}\big)=3(b^{q-1}-b^{1-q})\ne0;\]
if $b^{q-1}\notin\mathbb F_q$, then the minimal polynomial of $b^{q-1}$ over $\mathbb F_q$ is
\[x^3-\Tr(b^{q-1})x^2+\Tr\big((b^{q-1})^{q+1}\big)x-\N(b^{q-1})=x^3-\Tr(b^{q-1})x^2+\Tr\big(b^{q^2-1}\big)x-1,\]
whereas the assumption $\Tr\big(b^{q-1}-b^{q^2-1}\big)=0$ yields a root $1$ of it. 
\end{proof}

\begin{proposition}
For $n=2$ or $n=3$ and sufficiently large $q$, $L(x)+\frac c{\Tr(x)+b}$ is not a permutation rational function of $\mathbb F_{q^n}$ if $L$ does not permute $\mathbb F_{q^n}$.
\end{proposition}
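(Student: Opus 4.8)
The plan is to turn the necessary condition obtained above into a statement about $\mathbb F_q$-rational points on an explicit plane curve and then to apply Lemma \ref{bound}. With the normalization $\beta=1$ and $L(x)=x^q-x$, the discussion preceding the proposition shows that if $L(x)+\frac c{\Tr(x)+b}$ permutes $\mathbb F_{q^n}$ while $L$ does not, then
\[\Tr\left(\frac c{(x_0+b)(y_0+b)}\right)\ne0\]
for all distinct $x_0,y_0\in\mathbb F_q$. Hence it suffices to produce, for each $b\in\mathbb F_{q^n}\setminus\mathbb F_q$ and each $c\in\mathbb F_{q^n}^*$, a pair of distinct elements $x_0,y_0\in\mathbb F_q$ annihilating this trace; this contradicts the necessary condition and proves non-existence for $n\in\{2,3\}$ and large $q$.

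First I would clear denominators and study the numerator
\[F(X,Y)=\sum_{i=0}^{n-1}c^{q^i}\prod_{j\ne i}(X+b^{q^j})(Y+b^{q^j})=\N\big((X+b)(Y+b)\big)\,\Tr\left(\frac c{(X+b)(Y+b)}\right).\]
Since $\sigma_q$ cyclically permutes the summands, $F\in\mathbb F_q[X,Y]$; it is symmetric in $X$ and $Y$, of degree $n-1$ in each variable, and its top-degree form is $\Tr(c)\,X^{n-1}Y^{n-1}$. The decisive observation is that the denominator $\prod_i(x_0+b^{q^i})(y_0+b^{q^i})$ cannot vanish at an $\mathbb F_q$-point, because $b^{q^i}\notin\mathbb F_q$; therefore every $\mathbb F_q$-rational point $(x_0,y_0)$ of the affine curve $F=0$ with $x_0\ne y_0$ provides exactly the pair we seek.

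The heart of the argument is to show that $F$, or an absolutely irreducible factor of it defined over $\mathbb F_q$, is absolutely irreducible, after which Lemma \ref{bound} yields a rational point with $x_0\ne y_0$ as soon as $q-(d-1)(d-2)\sqrt q-2d+1>0$, which holds for all large $q$ with $d=\deg F$. To establish irreducibility I would exploit that any factorization over $\overline{\mathbb F_q}$ is stable under $\sigma_q$ (which permutes the partial-fraction terms cyclically), read off the behaviour of $F$ on the line at infinity from $\Tr(c)\,X^{n-1}Y^{n-1}$, and invoke Lemma \ref{basis} — that $(1,b^q+b,b^{q+1})$, and hence each conjugate $(1,b^{q^i}+b^{q^{i+1}},b^{q^i+q^{i+1}})$, is a basis of $\mathbb F_{q^3}/\mathbb F_q$ — to show that the coefficients of $F$ cannot satisfy the relations a splitting into factors of smaller degree would impose.

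The remaining care concerns the degenerate strata where leading traces vanish and $\deg F$ drops. Because $(1,b,b^2)$ is a basis of $\mathbb F_{q^3}/\mathbb F_q$ (here $3$ is prime and $b\notin\mathbb F_q$), not all of $\Tr(c),\Tr(cb),\Tr(cb^2)$ vanish for $c\ne0$, so $F\not\equiv0$ and its degree lies in $\{2,3,4\}$; I would split into cases accordingly and apply Lemma \ref{bound} with the matching $d$. For $n=2$, $F=\Tr(c)XY+\Tr(cb^q)(X+Y)+\Tr(cb^2)$ is a conic that I would treat by hand: when irreducible, Lemma \ref{bound} with $d=2$ applies for $q\ge4$; when it splits, it does so into two $\mathbb F_q$-rational lines, which still carry abundant $\mathbb F_q$-points off the diagonal $X=Y$, the only dangerous configuration (in which $F$ vanishes solely on $X=Y$) being excluded by a short trace computation showing that $b\notin\mathbb F_q$ forces the relevant constant term to be nonzero. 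I expect the absolute irreducibility of the quartic for $n=3$ to be the principal obstacle, since one must rule out every splitting into two conics or into a conic and two lines, and this is precisely where the basis property of Lemma \ref{basis} does the decisive work.
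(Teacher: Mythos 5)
Your skeleton matches the paper's: the same reduction (it suffices to find distinct $x_0,y_0\in\mathbb F_q$ with $\Tr\big(c/((x_0+b)(y_0+b))\big)=0$), the same numerator polynomial, the same appeal to Lemma \ref{bound}, and for $n=2$ an elementary conic analysis that is essentially the paper's (up to a slip: the constant term is $\Tr(c^qb^2)$, not $\Tr(cb^2)$). The genuine gap is the $n=3$ case: everything you say about it is a plan, not an argument. You assert that absolute irreducibility of the quartic is the principal obstacle and that Lemma \ref{basis} ``does the decisive work'' in ruling out every splitting, but you never write down the relations a splitting would impose, nor why they fail; this is exactly the hard core of the paper's proof and it is not routine. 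The paper does it by viewing $f(X,Y)$ as a quadratic in $Y$ over $\overline{\mathbb F_q}(X)$ (quadratic precisely because, by Lemma \ref{basis}, the $Y^2$-coefficient $\Tr\big(c^{q^2}(X+b)(X+b^q)\big)$ cannot vanish identically), taking a root $\xi\in\overline{\mathbb F_q}(X)$, analyzing the valuations of $\xi+b^{q^i}$ at the places corresponding to $X+b^{q^i}$, and forcing the coefficients of the putative factor into $\mathbb F_{q^3}$; the conclusion is a dichotomy, not irreducibility: either $\xi\in\mathbb F_q(X)$, or $\xi$, $\sigma_q(\xi)$, $\sigma_q^2(\xi)$ are three distinct roots of a quadratic, which is absurd. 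None of this machinery, nor any concrete substitute for it, appears in your proposal.

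Moreover, your stated goal is stronger than what is needed and possibly stronger than what is true. The paper never proves that the quartic is always absolutely irreducible: the case where it is reducible with an $\mathbb F_q$-rational factor $g_1(X)Y+g_0(X)$ is not excluded but handled separately, since such a factor is a rational curve carrying about $q$ points of which only boundedly many lie on the diagonal, so the required pair $(x_0,y_0)$ exists anyway for $q>3$. A proof along your lines must either build in this dichotomy (reducible with a rational factor is benign; reducible without one is contradictory) or establish outright irreducibility, which you have not done and which the known argument does not attempt. Relatedly, your tactic of reading factorizations off the top-degree form $\Tr(c)X^{n-1}Y^{n-1}$ gives nothing in the degenerate stratum $\Tr(c)=0$, which you defer to unspecified case analysis; the paper's valuation argument is indifferent to this degeneration, which is exactly why it, rather than a leading-form argument, carries the proof.
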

\begin{proof}
It suffices to show that
\[\Tr\left(\frac c{(x_0+b)(y_0+b)}\right)=0\]
for some $x_0,y_0\in\mathbb F_q$ with $x_0\ne y_0$. If $n=2$, then
\[\Tr(c^q(x_0+b)(y_0+b))=\Tr(c)x_0y_0+\Tr(bc^q)(x_0+y_0)+\Tr(b^2c^q),\]
where one of $\Tr(c)$ and $\Tr(bc^q)$ is nonzero since $b\notin\mathbb F_q$ and $c\ne0$, and it is easy to see that for $q>3$,
\[\Tr\left(\frac c{(x_0+b)(y_0+b)}\right)=\frac{\Tr(c^q(x_0+b)(y_0+b))}{\N((x_0+b)(y_0+b))}=0\]
for some $x_0,y_0\in\mathbb F_q$ with $x_0\ne y_0$.

Consider the case $n=3$ with the polynomial
\[f(X,Y)=\Tr\big(c^{q^2}(X+b)(X+b^q)(Y+b)(Y+b^q)\big)\]
over $\mathbb F_q$. Note that the coefficient of $Y^2$ in $f(X,Y)$ over $\mathbb F_q(X)$ is $\Tr\big(c^{q^2}(X+b)(X+b^q)\big)$, which is nonzero because $\Tr(c)=\Tr\big((b^q+b)c^{q^2}\big)=\Tr\big(b^{q+1}c^{q^2}\big)=0$ only if $c=0$ by the above lemma, so $f(X,Y)$ has degree at least $2$. If $f(X,Y)$ is absolutely irreducible, then by Lemma \ref{bound},
\[\Tr\left(\frac c{(x_0+b)(y_0+b)}\right)=\frac{f(x_0,y_0)}{\N((x_0+b)(y_0+b))}=0\]
for some $x_0,y_0\in\mathbb F_q$ with $x_0\ne y_0$, when $q-6\sqrt q-7>0$.

Assume $f(X,Y)$ is reducible over $\overline{\mathbb F_q}$. If $g(X)$ divides $f(X,Y)$ for some monic polynomial $g$ of positive degree over $\overline{\mathbb F_q}$, then so does $g(Y)$ by symmetry; otherwise $f(X,Y)$ is reducible over $\overline{\mathbb F_q}(X)$. In either case, let $\xi\in\overline{\mathbb F_q}(X)$ be a root of $f(X,Y)$ as a polynomial over $\mathbb F_q(X)$; that is,
\[\sum_{i=0}^2c^{q^{i+2}}\big(X+b^{q^i}\big)\big(X+b^{q^{i+1}}\big)\big(\xi+b^{q^i}\big)\big(\xi+b^{q^{i+1}}\big)=0.\]
Let $v_i$ ($i\in\{0,1,2\}$) be the valuation of $\overline{\mathbb F_q}(X)$ corresponding to $X+b^{q^i}$. Clearly it can not occur that $v_0(\xi+b)=v_0(\xi+b^q)=v_0\big(\xi+b^{q^2}\big)=0$. If $v_0(\xi+b)\ne0$, then either $v_0(\xi+b)>v_0(\xi+b^q)=v_0\big(\xi+b^{q^2}\big)=0$ or $v_0(\xi+b)=v_0(\xi+b^q)=v_0\big(\xi+b^{q^2}\big)<0$, which is false again. Then we have $v_0(\xi+b)=0$ and $v_0\big(\xi+b^{q^j}\big)\ne0$ with either $j=1$ or $j=2$. Now the case that $g(Y)$ divides $f(X,Y)$ has been excluded, and by factoring $f(X,Y)$ one finds
\[\xi+b^{q^j}=\frac{\alpha_0X+\beta_0}{\alpha_1X+\beta_1}\]
for some $\alpha_0,\beta_0,\alpha_1,\beta_1\in\overline{\mathbb F_q}$, where $\alpha_1=1$ or $\alpha_1=0$ with $\beta_1=1$. 

If $v_0\big(\xi+b^{q^j}\big)<0$, then $\alpha_1=1$ and $\beta_1=b$. Now suppose $v_0\big(\xi+b^{q^j}\big)>0$. We have
\[\xi+b^{q^j}=\frac{\alpha_0X+\alpha_0b}{\alpha_1X+\beta_1},\]
with
\[\xi+b^{q^{j+1}}=\frac{\alpha_0X+\alpha_1\big(b^{q^{j+1}}-b^{q^j}\big)X+\alpha_0b+\beta_1\big(b^{q^{j+1}}-b^{q^j}\big)}{\alpha_1X+\beta_1},\]
and
\[\xi+b^{q^{j+2}}=\frac{\alpha_0X+\alpha_1\big(b^{q^{j+2}}-b^{q^j}\big)X+\alpha_0b+\beta_1\big(b^{q^{j+2}}-b^{q^j}\big)}{\alpha_1X+\beta_1}.\]
Investigating $v_1$ and $v_2$ likewise gives $v_1\big(\xi+b^{q^{j+1}}\big)=v_2\big(\xi+b^{q^{j+2}}\big)=1$, and hence,
\[\alpha_0b^q+\alpha_1\big(b^{q^{j+1}}-b^{q^j}\big)b^q=\alpha_0b+\beta_1\big(b^{q^{j+1}}-b^{q^j}\big),\]
\[\alpha_0b^{q^2}+\alpha_1\big(b^{q^{j+2}}-b^{q^j}\big)b^{q^2}=\alpha_0b+\beta_1\big(b^{q^{j+2}}-b^{q^j}\big).\]
If $\alpha_1=1$, then
\[\begin{pmatrix}b^q-b&b^{q^j}-b^{q^{j+1}}\\b^{q^2}-b&b^{q^j}-b^{q^{j+2}}\end{pmatrix}\begin{pmatrix}\alpha_0\\\beta_1\end{pmatrix}=\begin{pmatrix}\big(b^{q^j}-b^{q^{j+1}}\big)b^q\\\big(b^{q^j}-b^{q^{j+2}})b^{q^2}\end{pmatrix},\]
where
\[\begin{vmatrix}b^{q^j}-b^{q^{j+1}}&\big(b^{q^j}-b^{q^{j+1}}\big)b^q\\b^{q^j}-b^{q^{j+2}}&\big(b^{q^j}-b^{q^{j+2}})b^{q^2}\end{vmatrix}\ne0.\]
If $\alpha_1=0$, then
\[\begin{pmatrix}b^q-b&b^{q^j}-b^{q^{j+1}}\\b^{q^2}-b&b^{q^j}-b^{q^{j+2}}\end{pmatrix}\begin{pmatrix}\alpha_0\\1\end{pmatrix}=\begin{pmatrix}0\\0\end{pmatrix}.\]
Accordingly, if $\alpha_0$ and $\beta_1$ satisfy the linear equations, then they must belong to $\mathbb F_{q^3}$. Therefore, in all cases, either $\xi\in\mathbb F_q(X)$, indicating the existence of $x_0,y_0\in\mathbb F_q$ with $x_0\ne y_0$ such that $f(x_0,y_0)=0$ when $q>3$, or $\xi$, $\sigma_q(\xi)$ and $\sigma_q^2(\xi)$ are distinct roots of the quadratic polynomial $f(X,Y)$ over $\overline{\mathbb F_q}(X)$, a contradiction.
\end{proof}

\section{The Case That $L$ Permutes $\mathbb F_{q^n}$}\label{s2}

Suppose now that $L$ permutes $\mathbb F_{q^n}$, whose inverse permutation polynomial is denoted by $L^{-1}$. If $L(x)+\frac c{\Tr(x)+b}$ is a permutation rational function of $\mathbb F_{q^n}$, then substituting $L^{-1}(x)$ for $x$ we have
\[x+\frac c{\Tr(L^{-1}(x))+b}=x+\frac c{\Tr(\alpha x)+b}\]
for some $\alpha\in\mathbb F_{q^n}$. Hence, we only consider $x+\frac c{\Tr(x)+b}$ for $b\in\mathbb F_{q^n}\setminus\mathbb F_q$ and $c\in\mathbb F_{q^n}^*$, and give the following criterion.

\begin{lemma}\label{equiv}
The rational function
\[x+\frac c{\Tr(x)+b}\]
is a permutation rational function of $\mathbb F_{q^n}$, if and only if
\[x+\Tr\left(\frac c{x+b}\right)\]
is a permutation rational function of $\mathbb F_q$, if and only if
\[\Tr\left(\frac c{(x_0+b)(y_0+b)}\right)\ne1\]
for all $x_0,y_0\in\mathbb F_q$ with $x_0\ne y_0$.
\end{lemma}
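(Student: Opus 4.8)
The plan is to prove the two stated equivalences in turn, exploiting the compatibility of the map $F(x)=x+\frac{c}{\Tr(x)+b}$ with the trace. First I would record that both rational functions are defined everywhere: since $b\notin\mathbb F_q$ while $\Tr(x)\in\mathbb F_q$ for $x\in\mathbb F_{q^n}$ and $x_0\in\mathbb F_q$, the denominators $\Tr(x)+b$ and $x_0+b$ never vanish on $\mathbb F_{q^n}$ and $\mathbb F_q$ respectively, so $F$ is a genuine self-map of $\mathbb F_{q^n}$ and $g(t)=t+\Tr\!\left(\frac{c}{t+b}\right)$ a genuine self-map of $\mathbb F_q$; this last $g$ is exactly the function in the second condition.

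For the first equivalence the central observation is the commuting relation $\Tr(F(x))=g(\Tr(x))$, obtained by applying $\Tr$ to $F(x)$ and using that $\Tr(x)\in\mathbb F_q$. Moreover, on each fibre $\Tr^{-1}(t)$ the difference $F(x)-x=\frac{c}{\Tr(x)+b}$ depends on $x$ only through $\Tr(x)$, so $F$ restricts to the translation $x\mapsto x+\frac{c}{t+b}$, hence a bijection of $\Tr^{-1}(t)$ onto $\Tr^{-1}(g(t))$, both of cardinality $q^{n-1}$. From this fibre picture I would conclude that $F$ permutes $\mathbb F_{q^n}$ exactly when $g$ permutes $\mathbb F_q$: if $g$ is bijective the fibres are permuted among themselves and $F$ is onto, hence bijective by finiteness; conversely a bijective $F$ forces $g\circ\Tr=\Tr\circ F$ to be surjective, whence $g(\mathbb F_q)=g(\Tr(\mathbb F_{q^n}))=\mathbb F_q$, so $g$ is surjective and thus bijective on the finite set $\mathbb F_q$.

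For the second equivalence I would use that a self-map of the finite field $\mathbb F_q$ permutes $\mathbb F_q$ iff it is injective, and then compute the difference $g(x_0)-g(y_0)$ for distinct $x_0,y_0\in\mathbb F_q$. Merging the two trace terms and pulling the scalar $y_0-x_0\in\mathbb F_q$ outside the trace yields
\[g(x_0)-g(y_0)=(x_0-y_0)\left(1-\Tr\left(\frac{c}{(x_0+b)(y_0+b)}\right)\right).\]
Since $x_0\ne y_0$, this vanishes precisely when $\Tr\!\left(\frac{c}{(x_0+b)(y_0+b)}\right)=1$, so injectivity of $g$ is equivalent to the inequality holding for all distinct $x_0,y_0$, which is the third condition.

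The computations here are routine; the one genuinely conceptual point is the fibre/commuting-diagram argument relating $F$ upstairs to $g$ downstairs. The only step demanding a little care is verifying that the restriction of $F$ to a fibre is an honest translation, so that it is injective and the counting of images is valid—this rests on $F(x)-x$ depending on $x$ solely through $\Tr(x)$. Granting this, the equivalence of the permutation behaviour of $F$ and of $g$ is immediate, and the remainder reduces to the displayed algebraic identity.
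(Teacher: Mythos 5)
Your proposal is correct and follows essentially the same route as the paper: the commuting relation $\Tr(F(x))=g(\Tr(x))$ and the fibre translation $x\mapsto x+\frac c{t+b}$ are exactly the paper's element-wise surjectivity transfer (the paper's explicit preimage $x_0=x_1-\frac c{t_0+b}$ is your translation inverse), with finiteness converting surjectivity into bijectivity in both. The second equivalence is proved by the identical difference computation.
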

\begin{proof}
For arbitrary $t_1\in\mathbb F_q$, we have $\Tr(x_1)=t_1$ for some $x_1\in\mathbb F_{q^n}$, and if
\[x_0+\frac c{\Tr(x_0)+b}=x_1\]
for some $x_0\in\mathbb F_{q^n}$, then
\[\Tr(x_0)+\Tr\left(\frac c{\Tr(x_0)+b}\right)=\Tr(x_1);\]
let $t_0=\Tr(x_0)$, so that
\[t_0+\Tr\left(\frac c{t_0+b}\right)=t_1.\]
Conversely, for arbitrary $x_1\in\mathbb F_{q^n}$, if the above equation holds for some $t_0\in\mathbb F_q$ and $t_1=\Tr(x_1)$, then let
\[x_0=x_1-\frac c{t_0+b},\]
so that $\Tr(x_0)=t_0$ and
\[x_0+\frac c{\Tr(x_0)+b}=x_0+\frac c{t_0+b}=x_1.\]
This proves the first equivalence.

Next, it is clear that
\[x+\Tr\left(\frac c{x+b}\right)\]
permutes $\mathbb F_q$ if and only if for all $x_0,y_0\in\mathbb F_q$ with $x_0\ne y_0$, one has
\[x_0+\Tr\left(\frac c{x_0+b}\right)\ne y_0+\Tr\left(\frac c{y_0+b}\right).\]
In fact,
\[\begin{split}&\mathrel{\phantom{=}}x_0+\Tr\left(\frac c{x_0+b}\right)-y_0-\Tr\left(\frac c{y_0+b}\right)\\&=x_0-y_0+\Tr\left(\frac{c(y_0-x_0)}{(x_0+b)(y_0+b)}\right)\\&=(x_0-y_0)\left(1-\Tr\left(\frac c{(x_0+b)(y_0+b)}\right)\right),\end{split}\]
and the result follows.
\end{proof}

This enables us to determine whether $x+\frac c{\Tr(x)+b}$ is a permutation rational function of $\mathbb F_{q^n}$. In the following, we first study the case $n=2$ and then the case $n=3$.

\begin{theorem}
For $n=2$, $x+\frac c{\Tr(x)+b}$ is a permutation rational function of $\mathbb F_{q^2}$ if and only if $c=(b^q-b)^{q+1}$.
\end{theorem}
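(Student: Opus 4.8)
The plan is to invoke Lemma~\ref{equiv}, which recasts the claim purely in terms of a two-variable polynomial. Writing
\[F(X,Y)=\N\big((X+b)(Y+b)-c\big)-\N(c)=\N\big((X+b)(Y+b)\big)-\Tr\big(c^q(X+b)(Y+b)\big),\]
one checks that for $x_0,y_0\in\mathbb F_q$ with denominators nonzero, $\Tr\big(c/((x_0+b)(y_0+b))\big)=1$ holds exactly when $F(x_0,y_0)=0$. Thus the task becomes: $F$ has no zero $(x_0,y_0)\in\mathbb F_q^2$ with $x_0\neq y_0$ if and only if $c=(b^q-b)^{q+1}$. Note that $F$ is symmetric, defined over $\mathbb F_q$, of total degree $4$, with degree-$4$ homogeneous part $X^2Y^2$.

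For sufficiency, set $\delta=b-b^q$ and record that $(b^q-b)^{q+1}=\N(b^q-b)=-(b^q-b)^2=4\N(b)-\Tr(b)^2\in\mathbb F_q$. Assuming $c$ equals this value, a direct expansion (for instance, after substituting $X=\delta X'-b$ and $Y=\delta Y'-b$, which turns $\delta^{-4}F$ into $(X'Y'-X'+1)(X'Y'-Y'+1)$) yields the factorization
\[F(X,Y)=\big((X+b)(Y+b^q)-c\big)\big((X+b^q)(Y+b)-c\big).\]
The second factor is the image of the first under $\sigma_q$, so $F=\N\big((X+b)(Y+b^q)-c\big)$, and hence for $x_0,y_0\in\mathbb F_q$ a zero forces $(x_0+b)(y_0+b^q)=c$. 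Expanding gives $b^qx_0+by_0=c-\N(b)-x_0y_0\in\mathbb F_q$; applying $\sigma_q$ yields $bx_0+b^qy_0=b^qx_0+by_0$, i.e. $(b^q-b)(x_0-y_0)=0$, so $x_0=y_0$. Thus $F$ has no off-diagonal rational zero, for every $q$, and the rational function permutes.

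For necessity I would show that $F$ is absolutely irreducible whenever $c\neq(b^q-b)^{q+1}$ and then apply Lemma~\ref{bound} with $d=4$: once $q-6\sqrt q-7>0$, an absolutely irreducible $F$ has a rational zero $(x_0,y_0)$ with $x_0\neq y_0$, so the function does not permute. To prove irreducibility, suppose $F=GH$ nontrivially over $\overline{\mathbb F_q}$. The leading form $X^2Y^2$ forces any linear factor to be $X+\lambda$ or $Y+\lambda$; but $F(-b,Y)=c\delta(Y+b^q)\not\equiv0$ (and symmetrically), ruling these out. A split $(X^2+\cdots)(Y^2+\cdots)$ would, after matching the $X^3,Y^3,X^2,\ldots$ coefficients, force $F=\N(X+b)\N(Y+b)$, impossible for $c\neq0$. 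Hence $F$ is a product of two quadratics with leading form $XY$, and its $X\leftrightarrow Y$ symmetry makes them $XY+a_1X+a_2Y+a_3$ and $XY+a_2X+a_1Y+a_3$ (the symmetric case $a_1=a_2$ needing $\Tr(b)^2=4\N(b)$, hence excluded). Comparing coefficients gives $a_1+a_2=\Tr(b)$ and $a_1a_2=\N(b)$, so $\{a_1,a_2\}=\{b,b^q\}$, together with three relations expressing $a_3$ through $\Tr(c)$, $\Tr(cb^q)$ and $\Tr(cb^{2q})$; their simultaneous solvability forces first $c\in\mathbb F_q$ and then $c=4\N(b)-\Tr(b)^2=(b^q-b)^{q+1}$, the desired contradiction.

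The main obstacle is this absolute-irreducibility step: every factorization type must be excluded, and the coefficient comparison carried out, the elimination of $a_3$ using $\Tr(cb^{2q})$ being precisely what singles out the value $(b^q-b)^{q+1}$. Two points would need separate care. The matching divides by $2$, so characteristic $2$—where $\delta=\Tr(b)\in\mathbb F_q$ and the target value is $\Tr(b)^2$—must be treated on its own. Moreover Lemma~\ref{bound} only furnishes the off-diagonal point once $q$ exceeds the explicit threshold, so to obtain the statement for every $q$ the finitely many small $q$ would have to be verified directly.
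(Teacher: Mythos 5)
Your strategy coincides with the paper's: reduce via Lemma~\ref{equiv} to the absence of off-diagonal rational zeros of $F(X,Y)=\N((X+b)(Y+b))-\Tr(c^q(X+b)(Y+b))$, prove $F$ absolutely irreducible whenever $c\ne(b^q-b)^{q+1}$ and invoke Lemma~\ref{bound} with $d=4$ (checking small $q$ by hand), and for $c=(b^q-b)^{q+1}$ exhibit the factorization $F=\bigl((X+b)(Y+b^q)-c\bigr)\bigl((X+b^q)(Y+b)-c\bigr)$ and rule out off-diagonal zeros by applying $\sigma_q$. Your sufficiency argument is correct and complete for all $q$ (one slip: with $X=\delta X'-b$, $Y=\delta Y'-b$ the transformed factors are $X'Y'+X'+1$ and $X'Y'+Y'+1$, not the versions with minus signs; the displayed factorization of $F$ itself is right, and is the same one the paper writes as $\N(XY+bX+b^qY+\Tr(b^2)-\N(b))$).

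The necessity direction has a genuine gap in the quadratic-times-quadratic case. Writing $F=GH$ with both leading forms $XY$, the $X\leftrightarrow Y$ symmetry of $F$ and unique factorization give \emph{two} cases: either $H(X,Y)=G(Y,X)$, which is the case you analyze, or $G$ and $H$ are \emph{each} symmetric. You identify the latter with ``$a_1=a_2$'', i.e.\ $G=H$, and dismiss it via $\Tr(b)^2=4\N(b)$; but two symmetric quadratics need not be equal — they may have different linear and constant coefficients. In that case matching the $X^2Y$ and $X^2$ coefficients gives $G=XY+b(X+Y)+\beta_0$ and $H=XY+b^q(X+Y)+\beta_0^q$, i.e.\ $F=\N\bigl((X+b)(Y+b)+\gamma\bigr)$ for some $\gamma$, and nothing in your argument excludes this. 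This is exactly the paper's case $\beta_1=b$, which it kills by a further coefficient comparison forcing $\beta_0=b^2-c$ and then $\N(c)=0$, contradicting $c\ne0$. A clean repair is available from your own identity $F=\N\bigl((X+b)(Y+b)-c\bigr)-\N(c)$: equating this with $\N\bigl((X+b)(Y+b)+\gamma\bigr)$ and comparing the $XY$, $X$ and constant coefficients yields $\Tr(c+\gamma)=\Tr\bigl(b^q(c+\gamma)\bigr)=0$, hence $\gamma=-c$ since $(1,b^q)$ is a basis of $\mathbb F_{q^2}/\mathbb F_q$, and then $\N(c)=0$, a contradiction. Separately, your coefficient relations in the swap case divide by $2$, so as you acknowledge characteristic $2$ needs its own treatment; note the paper's manipulation (eliminating $\beta_0$ via $\Tr(b^2c^q)=\Tr(b^2)(\N(b)-\beta_0)$ and injectivity of $c\mapsto(\Tr(c),\Tr(bc^q))$) avoids any division by $2$ and works uniformly, so you may prefer to adopt it rather than argue characteristic $2$ ad hoc.
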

\begin{proof}
Assume that the polynomial
\[f(X,Y)=\N((X+b)(Y+b))-\Tr(c^q(X+b)(Y+b))\]
is reducible over $\overline{\mathbb F_q}$. As a quadratic polynomial over $\overline{\mathbb F_q}[X]$, it has leading coefficient $\N(X+b)$; if it is not primitive, then it must be divisible by $X+b$ or $X+b^q$ in $\overline{\mathbb F_q}[X,Y]$, but this is not valid for $\Tr(c^q(X+b)(Y+b))$. Thus $f(X,Y)$ has a factor $g_1(X)Y+g_0(X)$ for some nonzero polynomials $g_0$ and $g_1$ over $\overline{\mathbb F_q}$ with $g_1$ monic. Denote by $v_\infty$ be the infinite valuation of $\overline{\mathbb F_q}(X)$ and let $\xi=-\frac{g_0(X)}{g_1(X)}\in\overline{\mathbb F_q}(X)$ so that
\[\N(X+b)(\xi+b)(\xi+b^q)=c^q(X+b)(\xi+b)+c(X+b^q)(\xi+b^q).\]
If $v_\infty(\xi+b)\le0$ and $v_\infty(\xi+b^q)\le0$, then
\[\begin{split}&\mathrel{\phantom{=}}v_\infty(\N(x+b)(\xi+b)(\xi+b^q))\\&=-2+v_\infty((\xi+b)(\xi+b^q))\\&<\min\{-1+v_\infty(\xi+b),-1+v_\infty(\xi+b^q)\}\\&\le v_\infty(c^q(x+b)(\xi+b)+c(x+b^q)(\xi+b^q)),\end{split}\]
which gives rise to a contradiction. Then let $v_\infty(\xi+b)>0$ without loss of generality, and consequently $v_\infty(\xi+b^q)=0$. Moreover,
\[v_\infty(\N(X+b)(\xi+b)(\xi+b^q))=v_\infty(\xi+b)-2,\]
and
\[v_\infty(c^q(X+b)(\xi+b)+c(X+b^q)(\xi+b^q))=-1,\]
so
\[v_\infty\left(\frac{bg_1(x)-g_0(x)}{g_1(x)}\right)=v_\infty(\xi+b)=1.\]
This happens only if $\deg(g_0)=\deg(g_1)=1$ and $g_0$ has leading coefficient $b$. We may write
\[g_0(X)=bX+\beta_0\quad\text{and}\quad g_1(X)=X+\beta_1\]
for some $\beta_0,\beta_1\in\overline{\mathbb F_q}$. Since $f(X,Y)\in\mathbb F_q[X,Y]$ is fixed by $\sigma_q$, it has another factor $XY+b^qX+\beta_1^qY+\beta_0^q$ in $\overline{\mathbb F_q}[X,Y]$, relatively prime to $XY+bX+\beta_1Y+\beta_0$. Comparing the leading coefficients, we get
\[f(X,Y)=(XY+bX+\beta_1Y+\beta_0)(XY+b^qX+\beta_1^qY+\beta_0^q)\]
with $\beta_0,\beta_1\in\mathbb F_{q^2}$. Since $f(X,Y)$ is symmetric, we have either $\beta_1=b$ or $\beta_1=b^q$ and $\beta_0=\beta_0^q$. Equating the coefficients of $XY$, $X$ and the constant terms from
\[\begin{split}&\mathrel{\phantom{=}}\N((X+b)(Y+b))-\Tr(c^q(X+b)(Y+b))\\&=(XY+bX+\beta_1Y+\beta_0)(XY+b^qX+\beta_1^qY+\beta_0^q)\end{split}\]
yields
\[\Tr(b)^2-\Tr(c)=\Tr(b\beta_1^q+\beta_0),\]
\[\Tr(b)\N(b)-\Tr(bc^q)=\Tr(b\beta_0^q),\]
and
\[\N(b)^2-\Tr(b^2c^q)=\N(\beta_0).\]
If $\beta_1=b$, then
\[\Tr(b^2-c-\beta_0)=\Tr(b^q(b^2-c-\beta_0))=0,\]
and thus
\[\N(b)^2-\Tr(b^2c^q)=\N(b^2-c)=\N(b)^2-\Tr(b^2c^q)+\N(c),\]
which implies $c=0$. It remains to consider the case $\beta_1=b^q$ and $\beta_0=\beta_0^q$. Now we have
\[2\N(b)-\Tr(c)-2\beta_0=\Tr(b)\N(b)-\Tr(bc^q)-\Tr(b)\beta_0=\N(b)^2-\Tr(b^2c^q)-\beta_0^2=0,\]
and then
\[\begin{split}\Tr(b^2c^q)&=\Tr((\Tr(b)b-\N(b))c^q)\\&=\Tr(b)\Tr(bc^q)-\N(b)\Tr(c)\\&=\Tr(b)^2(\N(b)-\beta_0)-2\N(b)(\N(b)-\beta_0)\\&=\Tr(b^2)(\N(b)-\beta_0),\end{split}\]
where $\N(b)-\beta_0\ne0$, for otherwise $\Tr(c)=\Tr(bc^q)=0$. Consequently,
\[\N(b)+\beta_0=\Tr(b^2),\]
and
\[\Tr(c)=2\N(b)-2\Tr(b^2)+2\N(b)=2\N(b^q-b),\]
while
\[\Tr(bc^q)=\Tr(b)(2\N(b)-\Tr(b^2))=\Tr(b)\N(b^q-b).\]
Such an element $c\in\mathbb F_{q^2}^*$ is uniquely determined by $b$, and we have $c=\N(b^q-b)$ satisfying the above equations.

We have proved that $f(X,Y)$ is absolutely irreducible if $c\ne\N(b^q-b)$. In this case, there exist $x_0,y_0\in\mathbb F_q$ with $x_0\ne y_0$ such that
\[1-\Tr\left(\frac c{(x_0+b)(y_0+b)}\right)=\frac{f(x_0,y_0)}{\N((x_0+b)(y_0+b))}=0\]
when $q-6\sqrt q-7>0$, by Lemma \ref{bound}. The rest is verified by exhaustive search. If $c=\N(b^q-b)$ for arbitrary $b\in\mathbb F_{q^2}\setminus\mathbb F_q$, then
\[f(X,Y)=\N(XY+bX+b^qY+\Tr(b^2)-\N(b)),\]
but $x_0y_0+bx_0+b^qy_0+\Tr(b^2)-\N(b)=0$ for $x_0,y_0\in\mathbb F_q$ only if
\[x_0y_0+\Tr(b)x_0+\Tr(b^2)-\N(b)=b^q(x_0-y_0),\]
where the right side lies outside of $\mathbb F_q$ if $x_0\ne y_0$.
\end{proof}

\begin{remark}
Let $\alpha$ be an element in $\mathbb F_{q^2}$ with $\alpha^{q-1}=-1$. Substituting $\alpha^{-1}x$ for $x$ in $x+\frac c{x^q+x+b}$, we get
\[\alpha^{-1}x+\frac c{\alpha^{-q}x^q+\alpha^{-1}x+b}=\alpha^{-1}\left(x+\frac{\alpha^{q+1}c}{x^q-x+\alpha^qb}\right).\]
This extends the result of \cite{hou2020type}, i.e., whether
\[x+\frac1{x^q-x+\alpha^qb}\]
is a permutation rational function of $\mathbb F_{q^2}$ (note that $b\notin\mathbb F_q$ if and only if $\Tr(\alpha^qb)\ne0$).
\end{remark}

\begin{theorem}
For $n=3$, $x+\frac c{\Tr(x)+b}$ is a permutation rational function of $\mathbb F_{q^3}$ if $c=-(b^q-b)^{q^2+1}$. The converse holds for sufficiently large $q$.
\end{theorem}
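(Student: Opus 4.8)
The plan is to invoke Lemma \ref{equiv}: the map $x+\frac c{\Tr(x)+b}$ permutes $\mathbb F_{q^3}$ exactly when $f(x_0,y_0)\ne0$ for all $x_0,y_0\in\mathbb F_q$ with $x_0\ne y_0$, where
\[f(X,Y)=\N((X+b)(Y+b))-\Tr\big(c^{q^2}(X+b)(X+b^q)(Y+b)(Y+b^q)\big)\]
is the numerator obtained after clearing $\N((X+b)(Y+b))$ in $1-\Tr\big(c/((X+b)(Y+b))\big)$ (the denominator never vanishes on $\mathbb F_q^2$ since $b\notin\mathbb F_q$). This $f$ is symmetric, lies in $\mathbb F_q[X,Y]$, and has total degree $6$ with leading term $X^3Y^3$. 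So I would study the factorization of $f$, exactly as in the preceding two proofs.

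For sufficiency I would first show that $c=-(b^q-b)^{q^2+1}=(b^{q^2}-b)(b^q-b)$ makes $f$ split. Setting $\mu=-c$ and expanding, a short computation gives
\[\N\big((X+b)(Y+b)-c\big)-f=\sum_{j=0}^2\frac{\N(c)}{c^{q^j}}(X+b^{q^j})(Y+b^{q^j})-\N(c),\]
whose coefficients of $XY$, of $X$ (equally $Y$), and constant term are $\N(c)\Tr(1/c)$, $\N(c)\Tr(b/c)$, and $\N(c)\big(\Tr(b^2/c)-1\big)$. Writing the three conjugates of $c$ through $d_i=b^{q^{i+1}}-b^{q^i}$ (so $d_0+d_1+d_2=0$) one checks directly that $\Tr(1/c)=\Tr(b/c)=0$ and $\Tr(b^2/c)=1$, hence $f=\N\big((X+b)(Y+b)-c\big)$. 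Therefore for $x_0,y_0\in\mathbb F_q$ we get $f(x_0,y_0)=\N\big((x_0+b)(y_0+b)-c\big)$, which vanishes only if $(x_0+b)(y_0+b)=c$; expanding both sides in the basis $(1,b,b^2)$ of $\mathbb F_{q^3}/\mathbb F_q$ forces $x_0+y_0=-2\Tr(b)$ together with the relation $1=3$ in $\mathbb F_q$. This is impossible in odd characteristic, while in characteristic $2$ it yields $x_0+y_0=0$, i.e. $x_0=y_0$. No off-diagonal root exists, so sufficiency holds for every $q$.

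For the converse I would argue through absolute irreducibility. If $f$ is absolutely irreducible then, as $d=6$ gives $q-(d-1)(d-2)\sqrt q-2d+1=q-20\sqrt q-11>0$ for large $q$, Lemma \ref{bound} produces $x_0,y_0\in\mathbb F_q$ with $x_0\ne y_0$ and $f(x_0,y_0)=0$, contradicting the permutation property. Hence for sufficiently large $q$ a permutation forces $f$ to be reducible over $\overline{\mathbb F_q}$, and it remains to deduce $c=-(b^q-b)^{q^2+1}$. Viewing $f$ as a cubic in $Y$ over $\overline{\mathbb F_q}[X]$ with leading coefficient $\N(X+b)$, it is primitive (no $X+b^{q^i}$ divides $f$, as one sees by specializing $X=-b^{q^i}$), so every factor has positive degree in $Y$ and $f$ has a factor linear in $Y$; let $\xi\in\overline{\mathbb F_q}(X)$ be the associated root of $f(X,Y)$.

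I would then replicate the valuation analysis of the proposition in Section \ref{s1} at the three places $X=-b^{q^i}$: writing $f(X,\xi)=0$ and comparing the orders of the factors $\xi+b^{q^j}$ at each place shows that at every such place $\xi$ is either a simple pole or meets one of $-b^{q^{i\pm1}}$ to first order, which pins the linear-in-$Y$ factor down to bidegree $(1,1)$. Invariance under $\sigma_q$ together with the $X,Y$-symmetry then upgrades this to $f=\N\big((X+b)(Y+b)+\mu\big)$ for some $\mu\in\mathbb F_{q^3}$, and matching the lower coefficients — using that $(1,b,b^2)$, equivalently $(1,b,1/b)$ in the spirit of Lemma \ref{basis}, is a basis — forces $\mu=-c$ together with $\Tr(1/c)=\Tr(b/c)=0$ and $\Tr(b^2/c)=1$, which determine $c$ uniquely as $-(b^q-b)^{q^2+1}$. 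The main obstacle is exactly this valuation bookkeeping: because $f$ has degree $6$ rather than the degree-$4$ polynomial of Section \ref{s1}, more pole/zero configurations at the three places must be excluded, and the residual possibility $\xi\in\mathbb F_q(X)$ must be dispatched just as it was there for $q>3$.
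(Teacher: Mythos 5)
Your sufficiency argument is correct, complete, and follows the same route as the paper: you verify the factorization $f=\N((X+b)(Y+b)-c)$ directly (the paper checks the equivalent identities $\Tr(c^{q+1})=\Tr\big(bc^{q^2+q}\big)=\Tr\big(b^2c^{q^2+q}\big)-\N(c)=0$), and your basis comparison forcing $1=3$ and $x_0+y_0=-2\Tr(b)$ is a clean substitute for the paper's argument via $\Tr(1)=1$ followed by a characteristic-$2$ expansion. The converse also follows the paper's strategy (Lemma \ref{bound} in the absolutely irreducible case, factorization analysis otherwise), but there is a genuine gap in its central step.

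The gap: the valuation analysis at the three finite places $X=-b^{q^i}$ does \emph{not} pin the linear-in-$Y$ factor $g_1(X)Y+g_0(X)$ down to bidegree $(1,1)$. What that local analysis actually gives is: at each such place, either all three functions $\xi+b^{q^j}$ share a simple pole, or exactly one of $\xi+b^{q^{i+1}},\xi+b^{q^{i+2}}$ has a simple zero there. This is perfectly compatible with $\xi$ having simple poles at two (or all three) of these places, i.e.\ with $\deg g_1=2$ or $3$ and a factor of bidegree $(2,1)$ or $(3,1)$; no purely local information excludes these configurations. They must be excluded globally, and that is exactly what the paper does differently by working at the place at infinity: there one gets $v_\infty(\xi+b)=1$, hence $\deg g_0=\deg g_1$ with $g_0$ having leading coefficient $b\notin\mathbb F_q$; consequently $g_1(X)Y+g_0(X)$, $\sigma_q(g_1(X)Y+g_0(X))$ and $\sigma_q^2(g_1(X)Y+g_0(X))$ are distinct, pairwise coprime irreducible factors of $f$, their product divides $f$, and comparing $Y^3$-coefficients ($g_1\cdot\sigma_q(g_1)\cdot\sigma_q^2(g_1)$ must equal $\N(X+b)$ with all three factors monic) finally forces $\deg g_1=\deg g_0=1$. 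Note that the logical order is the opposite of yours: the $\sigma_q$-invariance (and then the $X,Y$-symmetry) is what \emph{proves} the bidegree claim, so it cannot be deferred until after it. A second, smaller inaccuracy: $\mu=-c$ is not obtained by "matching the lower coefficients" but from the coefficients of $X^2Y^2$, $X^2Y$ and $X^2$, which give $\Tr(c+\mu)=\Tr\big((b^q+b)(c+\mu)^{q^2}\big)=\Tr\big(b^{q+1}(c+\mu)^{q^2}\big)=0$ and require Lemma \ref{basis}, i.e.\ that $(1,b^q+b,b^{q+1})$ is a basis; only after that do the lower coefficients yield $\Tr(1/c)=\Tr(b/c)=\Tr(b^2/c)-1=0$ via the basis $(1,b,b^2)$, which is where your uniqueness conclusion for $c$ correctly enters.
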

\begin{proof}
We claim that if $c\ne-(b^q-b)^{q^2+1}$, then the polynomial
\[f(X,Y)=\N((X+b)(Y+b))-\Tr\big(c^{q^2}(X+b)(X+b^q)(Y+b)(Y+b^q)\big)\]
over $\mathbb F_q$ is absolutely irreducible; otherwise,
\[f(X,Y)=\N((X+b)(Y+b)-c).\]
If $f(X,Y)$ is absolutely irreducible, then by Lemma \ref{bound}, there exist $x_0,y_0\in\mathbb F_q$ such that
\[1-\Tr\left(\frac c{(x_0+b)(y_0+b)}\right)=\frac{f(x_0,y_0)}{\N((x_0+b)(y_0+b))}=0\]
with $x_0\ne y_0$, when $q-20\sqrt q-11>0$. Next, let $f(X,Y)=\N((X+b)(Y+b)-c)$ with $c=-(b^q-b)^{q^2+1}$, and assume $f(x_0,y_0)=0$ for some $x_0,y_0\in\mathbb F_q$ with $x_0\ne y_0$. It follows that $(x_0+b)(y_0+b)=c$, in which case,
\[\Tr(1)=\Tr\left(\frac c{(x_0+b)(y_0+b)}\right)=1-\frac{f(x_0,y_0)}{\N((x_0+b)(y_0+b))}=1,\]
and then $q$ is even. Note that
\[(x_0+b)(y_0+b)-c=x_0y_0+b(x_0+y_0)+b^2+(b^q-b)^{q^2+1}=x_0y_0+b(x_0+y_0)+\Tr(b^{q+1}),\]
but $b(x_0+y_0)\notin\mathbb F_q$, a contradiction.

In what follows, we prove the claim. Assume $f(X,Y)$ is reducible over $\overline{\mathbb F_q}$. By a similar argument, it is a primitive polynomial over $\overline{\mathbb F_q}[X]$ and has a factor $g_1(X)Y+g_0(X)$ for some nonzero polynomials $g_0$ and $g_1$ over $\overline{\mathbb F_q}$ with $g_1$ monic. For the infinite valuation $v_\infty$ of $\overline{\mathbb F_q}(X)$ and $\xi=-\frac{g_0(X)}{g_1(X)}\in\overline{\mathbb F_q}(X)$, which satisfies
\[\N(X+b)(\xi+b)(\xi+b^q)\big(\xi+b^{q^2}\big)=\sum_{i=0}^2c^{q^{i+2}}\big(X+b^{q^i}\big)\big(X+b^{q^{i+1}}\big)\big(\xi+b^{q^i}\big)\big(\xi+b^{q^{i+1}}\big),\]
we exclude the case that $v_\infty(\xi+b)\le0$, $v_\infty(\xi+b^q)\le0$ and $v_\infty\big(\xi+b^{q^2}\big)\le0$, and let $v_\infty(\xi+b)>v_\infty(\xi+b^q)=v_\infty\big(\xi+b^{q^2}\big)=0$ as in the proof of the last theorem. Furthermore,
\[v_\infty\big(\N(X+b)(\xi+b)(\xi+b^q)\big(\xi+b^{q^2}\big)\big)=v_\infty(\xi+b)-3,\]
\[v_\infty\left(\sum_{i=0}^2c^{q^{i+2}}\big(X+b^{q^i}\big)\big(X+b^{q^{i+1}}\big)\big(\xi+b^{q^i}\big)\big(\xi+b^{q^{i+1}}\big)\right)=-2,\]
and thus,
\[v_\infty\left(\frac{bg_1(X)-g_0(X)}{g_1(X)}\right)=v_\infty(\xi+b)=1.\]
This means $g_0$ has leading coefficient $b$ with $\deg(g_0)=\deg(g_1)$. It is clear that $f(X,Y)$ is fixed by the automorphism $\sigma_q$ of $\overline{\mathbb F_q}[X,Y]$, while $g_0(X)\notin\mathbb F_q[X]$, so $g_1(X)Y+g_0(X)$, $\sigma_q(g_1(X)Y+g_0(X))$ and $\sigma_q^2(g_1(X)Y+g_0(X))$ are distinct irreducible polynomials in $\overline{\mathbb F_q}[X,Y]$ dividing $f(X,Y)$. Moreover, they are relatively prime in $\overline{\mathbb F_q}[X,Y]$ since $g_1$ is monic. Therefore, $f(X,Y)$ is divisible by their product. In fact, noticing that $f(X,Y)$ is a cubic polynomial in one indeterminate and comparing the leading coefficients, we obtain
\[f(X,Y)=(g_1(X)Y+g_0(X))\sigma_q(g_1(X)Y+g_0(X))\sigma_q^2(g_1(X)Y+g_0(X)),\]
where $\deg(g_0)=\deg(g_1)=1$. 

We now show that $g_1(X)Y+g_0(X)$ is symmetric. Assume the contrary so that
\[g_1(X)Y+g_0(X)\ne g_1(Y)X+g_0(Y).\]
By the same argument, $g_1(X)Y+g_0(X)$ and $g_1(Y)X+g_0(Y)$ are relatively prime, and thus $f(X,Y)$ as a symmetric polynomial in $\overline{\mathbb F_q}[X,Y]$ is divisible by $(g_1(X)Y+g_0(X))(g_1(Y)X+g_0(Y))$. We may write
\[f(X,Y)=(g_1(X)Y+g_0(X))(g_1(Y)X+g_0(Y))h(X,Y),\]
where $h(X,Y)$ is a linear polynomial over $\overline{\mathbb F_q}[X]$ as well as a symmetric polynomial in $\overline{\mathbb F_q}[X,Y]$, but neither $\sigma_q(g_1(X)Y+g_0(X))$ nor $\sigma_q^2(g_1(X)Y+g_0(X))$ is symmetric. Eventually,
\[f(X,Y)=\N((X+b)(Y+b)+\gamma),\]
for some $\gamma\in\mathbb F_{q^3}$. Expanding $\N((X+b)(Y+b)+\gamma)$ leads to
\[\Tr\big((c+\gamma)^{q^2}(X+b)(X+b^q)(Y+b)(Y+b^q)\big)+\Tr\big(\gamma^{q^2+q}(X+b)(Y+b)\big)+\N(\gamma)=0.\]
Looking at the coefficients of $X^2Y^2$, $X^2Y$ and $X^2$, we assert that
\[\Tr(c+\gamma)=\Tr\big((b^q+b)(c+\gamma)^{q^2}\big)=\Tr\big(b^{q+1}(c+\gamma)^{q^2}\big)=0,\]
which implies $c+\gamma=0$ by Lemma \ref{basis}. This allows one to write
\[\Tr\big(c^{q^2+q}(x+b)(y+b)\big)-\N(c)=0,\]
and, by computing the coefficients,
\[\Tr(c^{q+1})=\Tr\big(bc^{q^2+q}\big)=\Tr\big(b^2c^{q^2+q}\big)-\N(c)=0.\]
Divide it by $\N(c)$ to get
\[\Tr(c^{-1})=\Tr(bc^{-1})=\Tr(b^2c^{-1})-1=0,\]
where $c^{-1}$ is uniquely determined by $b$, since $(1,b,b^2)$ is a basis of $\mathbb F_{q^3}/\mathbb F_q$. If $c=-(b^q-b)^{q^2+1}$, then
\[c^{q+1}=(b^q-b)^{(q^2+1)(q+1)}=\N(b^q-b)(b^q-b),\]
which implies
\[\Tr(c^{q+1})=0,\]
\[\Tr\big(bc^{q^2+q}\big)=\N(b^q-b)\Tr\big(b^{q^2+1}-b^{q+1}\big)=0,\]
and
\[\Tr\big(b^2c^{q^2+q}\big)-\N(c)=\N(b^q-b)\Tr\big(b^{q^2+2}-b^{q+2}\big)+\N(b^q-b)^2=0,\]
where $\N(b^q-b)=\Tr\big(b^{q^2+2}-b^{q+2}\big)$ as easily seen. In this case, $f(X,Y)$ is factored as desired. The proof of the claim is then complete.
\end{proof}

\begin{remark}
Suppose $q$ is odd, $b\in\mathbb F_{q^3}\setminus\mathbb F_q$ and $c=-(b^q-b)^{q^2+1}$. It has been shown that
\[\Tr\left(\frac c{x_0y_0+b(x_0+y_0)+b^2}\right)\ne1\]
for all $x_0,y_0\in\mathbb F_q$. In fact, one can prove a slightly stronger result that
\[\Tr\left(\frac c{u+bv+b^2}\right)\ne1\]
for all $u,v\in\mathbb F_q$. Let $z=u+bv+b^2$ and assume $\Tr(cz^{-1})=1$. We have just seen that
\[\Tr(c^{-1})=\Tr(bc^{-1})=\Tr(b^2c^{-1})-1=0,\]
and then $\Tr(zc^{-1})=1$. If $cz^{-1}\in\mathbb F_q$, then
\[9=(3cz^{-1})(3zc^{-1})=\Tr(cz^{-1})\Tr(zc^{-1})=1,\]
and $q$ is even. If $cz^{-1}\notin\mathbb F_q$, then its minimal polynomial over $\mathbb F_q$ is
\[x^3-\Tr(cz^{-1})x^2+\N(cz^{-1})\Tr(zc^{-1})x-\N(cz^{-1})=x^3-x^2+\N(cz^{-1})x-\N(cz^{-1}),\]
which, however, is reducible with a root $1$.
\end{remark}

The above sufficient conditions can be extended as follows.

\begin{corollary}
Let $\Tr_{i,j}$ denote the trace map from $\mathbb F_{q^i}$ to $\mathbb F_{q^j}$. Then $x+\frac c{\Tr(x)+b}$ is a permutation rational function of $\mathbb F_{q^n}$ if
\begin{itemize}
\item $2$ divides $n$, $b\in\mathbb F_{q^2}\setminus\mathbb F_q$ and $\Tr_{n,2}(c)=(b^q-b)^{q+1}$, or
\item $3$ divides $n$, $b\in\mathbb F_{q^3}\setminus\mathbb F_q$ and $\Tr_{n,3}(c)=-(b^q-b)^{q^2+1}$.
\end{itemize}
\end{corollary}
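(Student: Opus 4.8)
The plan is to reduce each of the two cases to the corresponding theorem already established for $n=2$ and $n=3$, exploiting the transitivity of the trace. Throughout I would write $\Tr=\Tr_{n,1}$ for the trace from $\mathbb F_{q^n}$ to $\mathbb F_q$, and I would begin by noting that since $b\in\mathbb F_{q^2}\setminus\mathbb F_q$ (resp.\ $\mathbb F_{q^3}\setminus\mathbb F_q$) we still have $b\in\mathbb F_{q^n}\setminus\mathbb F_q$, so Lemma \ref{equiv} applies over $\mathbb F_{q^n}$: it suffices to prove that
\[\Tr\left(\frac c{(x_0+b)(y_0+b)}\right)\ne1\]
for all $x_0,y_0\in\mathbb F_q$ with $x_0\ne y_0$.

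For the first case, the key observation is that the denominator $w:=(x_0+b)(y_0+b)$ lies in $\mathbb F_{q^2}$, and $w\ne0$ because $b\notin\mathbb F_q$. Using the tower decomposition $\Tr_{n,1}=\Tr_{2,1}\circ\Tr_{n,2}$ together with the $\mathbb F_{q^2}$-linearity of $\Tr_{n,2}$ to extract $w^{-1}\in\mathbb F_{q^2}$, I would rewrite
\[\Tr\left(\frac cw\right)=\Tr_{2,1}\left(\frac{\Tr_{n,2}(c)}w\right).\]
Setting $c_2:=\Tr_{n,2}(c)=(b^q-b)^{q+1}$, which is a nonzero element of $\mathbb F_{q^2}$, the theorem for $n=2$ guarantees that $x+\frac{c_2}{\Tr_{2,1}(x)+b}$ permutes $\mathbb F_{q^2}$, and by the criterion in Lemma \ref{equiv} (now applied over $\mathbb F_{q^2}$) this is precisely the assertion that $\Tr_{2,1}(c_2/w)\ne1$ whenever $x_0\ne y_0$. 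Hence $\Tr(c/w)\ne1$, as required.

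The second case is handled identically with $2$ replaced by $3$: here $w\in\mathbb F_{q^3}$, the tower $\Tr_{n,1}=\Tr_{3,1}\circ\Tr_{n,3}$ reduces the quantity to $c_3:=\Tr_{n,3}(c)=-(b^q-b)^{q^2+1}\ne0$, and the sufficient direction of the theorem for $n=3$ together with Lemma \ref{equiv} over $\mathbb F_{q^3}$ yields $\Tr_{3,1}(c_3/w)\ne1$ for $x_0\ne y_0$. I expect no genuine obstacle in this argument; the only points needing care are to keep the trace tower and the $\mathbb F_{q^2}$- (resp.\ $\mathbb F_{q^3}$-) linearity straight, and to observe that only the ``if'' direction of the $n=3$ theorem is invoked, so that no large-$q$ hypothesis is inherited by the corollary.
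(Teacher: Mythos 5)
Your proof is correct and follows essentially the same route as the paper: both reduce the claim via Lemma \ref{equiv}, use the trace tower $\Tr_{n,1}=\Tr_{2,1}\circ\Tr_{n,2}$ (resp.\ $\Tr_{3,1}\circ\Tr_{n,3}$) together with $\mathbb F_{q^2}$- (resp.\ $\mathbb F_{q^3}$-) linearity to pull out $\Tr_{n,2}(c)$ (resp.\ $\Tr_{n,3}(c)$), and then invoke the quadratic and cubic theorems. The only cosmetic difference is that you pivot through the two-variable criterion $\Tr\bigl(c/((x_0+b)(y_0+b))\bigr)\ne1$ while the paper pivots through the one-variable criterion that $x+\Tr\bigl(c/(x+b)\bigr)$ permutes $\mathbb F_q$ --- both clauses of the same Lemma \ref{equiv} --- and your explicit remark that only the ``if'' direction of the $n=3$ theorem is needed (so no large-$q$ hypothesis is inherited) is a point the paper leaves implicit.
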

\begin{proof}
By Lemma \ref{equiv}, the rational function permutes $\mathbb F_{q^n}$ if and only if
\[x+\Tr\left(\frac c{x+b}\right)\]
permutes $\mathbb F_q$. If $2$ divides $n$ and $b\in\mathbb F_{q^2}\setminus\mathbb F_q$, then
\[x+\Tr\left(\frac c{x+b}\right)=x+\Tr_{2,1}\circ\Tr_{n,2}\left(\frac c{x+b}\right)=x+\Tr_{2,1}\left(\frac{\Tr_{n,2}(c)}{x+b}\right),\]
as a rational function over $\mathbb F_q$. The same holds for the case that $3$ divides $n$ and the result follows immediately from the above on quadratic and cubic extensions.
\end{proof}

\section{Conclusions}

We have studied permutation rational functions of $\mathbb F_{q^n}$ in the form
\[L(x)+\frac c{\Tr(x)+b}\]
for $b\in\mathbb F_{q^n}\setminus\mathbb F_q$ and $c\in\mathbb F_{q^n}^*$. Specifically, we have investigated the rational function
\[x+\frac c{\Tr(x)+b}\]
for $n=2$ or $n=3$. One may consider those permutation rational functions for greater $n$. However, factorization of the corresponding polynomials in $\mathbb F_{q^n}[X,Y]$ is much more complicated, so it could be difficult to apply the aforementioned method. For future work, it would be interesting to find a more general treatment for this class of permutation rational functions and study their properties.

\bibliographystyle{abbrv}
\bibliography{references}

\end{document}